\title{A Note on a Unitary Analog to Redheffer's Matrix}
\author{Olivier Bordell\`{e}s}
\address{2 all\'{e}e de la combe \\ 43000 Aiguilhe \\ France}
\email{borde43@wanadoo.fr}
\date{}
\dedicatory{}
\theoremstyle{plain}
\newtheorem{theorem}{Theorem}
\newtheorem{corollary}[theorem]{Corollary}
\newtheorem{lemma}[theorem]{Lemma}
\newtheorem{prop}[theorem]{Proposition}
\theoremstyle{definition}
\theoremstyle{remark}
\newcommand{\N}{\mathbb {N}}
\newcommand{\Z}{\mathbb {Z}}
\begin{document}

\begin{abstract}
\noindent
We study a unitary analog to Redheffer's matrix. It is first proved that the determinant of this matrix is the unitary analogue to that of Redheffer's matrix. We also show that the coefficients of the characteristic polynomial may be expressed as sums of Stirling numbers of the second kind. This implies in particular that $1$ is an eigenvalue with algebraic multiplicity greater than that of Redheffer's matrix.
\end{abstract}

\subjclass[2010]{Primary 11A25; Secondary 15A15, 15A18, 11C20.}

\keywords{determinants, unitary convolution, unitary M\"{o}bius function, eigenvalues.}

\maketitle

\thispagestyle{myheadings}
\font\rms=cmr8 
\font\its=cmti8 
\font\bfs=cmbx8

\section{Introduction}
\label{s1}

In 1977, Redheffer \cite{red} introduced the matrix $R_n=(r_{ij}) \in \mathcal {M}_n(\{0,1\})$ defined by
$$r_{ij} = \begin{cases} 1,& \textrm{if\ } i \mid j \, \, \mbox{or} \, \,  j=1 \\ 0, & \textrm{otherwise\ } \end{cases}$$
and has shown that
$$\det R_n = M(n) := \sum_{k=1}^{n} \mu(k)$$
where $\mu$ is the M\"{o}bius function and $M$ is the Mertens function. This determinant is clearly related to two of the most famous problems in number theory, namely the Prime Number Theorem (PNT) and the Riemann Hypothesis (RH) since it is well-known that 
$$\text{PNT} \Longleftrightarrow M(n) = o(n) \quad \text{and} \quad \text{RH} \Longleftrightarrow M(n) = O_{\varepsilon} \left ( n^{1/2 + \varepsilon} \right ).$$
These estimates remain unproven, but Vaughan \cite{vau} showed that $1$ is an eigenvalue of $R_n$ with algebraic multiplicity $n - \left \lfloor \frac{\log n}{\log 2} \right \rfloor - 1$, that $R_n$ has two "dominant" eigenvalues $\lambda_{\pm}$ such that $|\lambda_{\pm}| \asymp n^{1/2}$, and that the others eigenvalues satisfy $\lambda \ll (\log n)^{2/5}$.

The purpose of this note is to supply an analogous study to the $\{0,1\}$-matrix $R_n^* = \left( \rho_{ij} \right)$ defined by
$$\rho_{ij} = \begin{cases} 1,& \textrm{if\ } i \parallel j \, \, \mbox{or} \, \,  j=1 \\ 0, & \textrm{otherwise}. \end{cases}$$
Recall that the integer $i$ is said to be a \textit{unitary divisor} of $j$, denoted by $i  \parallel j$,  whenever
$$i \mid j \quad \textrm{and} \quad \gcd \left( i, \tfrac{j}{i} \right) = 1.$$
For instance, when $n=8$, we have
$$R_8^* = \begin{pmatrix} 1 & 1 & 1 & 1 & 1 & 1 & 1 & 1 \\ 1 & 1 & 0 & 0 & 0 & 1 & 0 & 0 \\ 1 & 0 & 1 & 0 & 0 & 1 & 0 & 0 \\ 1 & 0 & 0 & 1 & 0 & 0 & 0 & 0 \\ 1 & 0 & 0 & 0 & 1 & 0 & 0 & 0 \\ 1 & 0 & 0 & 0 & 0 & 1 & 0 & 0 \\ 1 & 0 & 0 & 0 & 0 & 0 & 1 & 0 \\ 1 & 0 & 0 & 0 & 0 & 0 & 0 & 1 \end{pmatrix}.$$
Note that this matrix does not belong to the set of general matrices studied in \cite{car}. \\

This article is organized as follows: in Section~\ref{s2}, we shall use some elementary properties of unitary divisors to determine a LU-decomposition of the matrix $R_n^*$ and deduce its determinant. In Section~\ref{s3}, following the ideas of \cite{vau}, we shall discuss further on the characteristic polynomial of $R_n^*$ and the algebraic multiplicity of the eigenvalue $1$ of this matrix.

\subsection*{Notation.}

In what follows, $n \geqslant 2$ is a fixed integer and the function $\mu^*$ is the unitary analog of the M\"{o}bius function. We also define
$$M^*(x,n) := \sum_{\substack{k \leqslant x \\ \gcd(k,n) = 1}} \mu^*(k) \quad \left( x > 0, \ n \in \N \right)$$
and simply write $M^*(x) := M^*(x,1)$ which is the unitary analog of the Mertens function. As usual, let $\mathbf{1}(n)=1$ and the \textit{unitary convolution product} of the two arithmetic functions $f$ and $g$ is defined by
$$(f \odot g) (n) = \sum_{d \parallel n} f(d) g(n/d).$$
Finally, from \cite[Theorem~2.5]{coh} it is known that
$$\mu^*(n) = (-1)^{\omega(n)}$$
where $\omega(n)$ is the number of distinct prime factors of $n$, and from \cite[Corollary~2.1.2]{coh} we have the important convolution identity
\begin{alignat}{1}
   \left( \mu^* \odot \mathbf{1} \right) (n) = \begin{cases} 1, & \textrm{if\ } n=1 \\ 0, & \textrm{otherwise}. \end{cases} \label{e1} 
\end{alignat}

\section{The determinant of $R_n^*$}
\label{s2}

We start with the following basic identities involving unitary divisors which will prove to be useful to determine a LU-type decomposition of the matrix $R_n^*$.

\begin{lemma}
\label{le1}
\begin{enumerate}
   \item[]
   \item[{\rm (i)}] Let $i,j$ be positive integers. Then
$$\sum_{\substack{d \parallel j \\ i \parallel j/d}} \mu^*(d) = \begin{cases} 1, & \textrm{if\ } i=j \\ 0, & \textrm{otherwise}. \end{cases}$$
   \item[{\rm (ii)}] Let $1 \leq i \leq n$ be integers. Then
$$\sum_{\substack{k \leq n \\ i \parallel k}} M^* \left( \frac{n}{k},k \right) = 1.$$
\end{enumerate} 
\end{lemma}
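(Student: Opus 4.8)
The plan is to deduce (i) directly from the fundamental unitary convolution identity \eqref{e1} by reindexing the sum, and then to obtain (ii) from (i) by interchanging the order of summation and making a single change of variable.

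For part (i), I would first note that the sum is empty unless there exists $d$ with $d \parallel j$ and $i \parallel j/d$, and that such a $d$ forces $i \parallel j$. The structural point I would establish is that, for fixed positive integers $i,j$,
$$\left\{ d : d \parallel j \text{ and } i \parallel j/d \right\} = \begin{cases} \left\{ d : d \parallel j/i \right\}, & \text{if } i \parallel j, \\ \varnothing, & \text{otherwise.} \end{cases}$$
The cleanest way to see this is to write $j = ide$ with $e := j/(id)$ and observe that the pair of conditions ``$d \parallel j$ and $i \parallel j/d$'' is equivalent to ``$i,d,e$ are pairwise coprime and $ide = j$'', which in turn is equivalent to ``$i \parallel j$ and $d \parallel j/i$''; each implication is just a rearrangement of the coprimality constraints $\gcd(d,j/d)=1$ and $\gcd(i,(j/d)/i)=1$. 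Granting this, the sum equals $[\,i \parallel j\,]\sum_{d \parallel j/i}\mu^*(d) = [\,i\parallel j\,]\,(\mu^* \odot \mathbf{1})(j/i)$, and \eqref{e1} evaluates the right-hand side to $1$ exactly when $j/i = 1$, i.e. $i=j$, and to $0$ otherwise.

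For part (ii), I would expand the definition of $M^*$ to rewrite the left-hand side as the double sum $\sum_{k\le n,\ i\parallel k}\ \sum_{\ell \le n/k,\ \gcd(\ell,k)=1}\mu^*(\ell)$. Setting $m = k\ell$, the condition $\gcd(\ell,k)=1$ is precisely what makes $k \parallel m$, and $(k,\ell)\mapsto (k,m)$ is a bijection onto the pairs $(k,m)$ with $m \le n$, $k \parallel m$, and $i \parallel k$, under which $\mu^*(\ell) = \mu^*(m/k)$. Hence the left-hand side equals $\sum_{m\le n}\ \sum_{k\parallel m,\ i\parallel k}\mu^*(m/k)$. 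Writing $d = m/k$ and using that $k \parallel m \iff (m/k) \parallel m$, the inner sum becomes $\sum_{d\parallel m,\ i\parallel m/d}\mu^*(d)$, which by part (i) is $[\,m = i\,]$. Since $1 \le i \le n$, exactly one value of $m$ contributes, and the total is $1$.

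The only delicate step — the ``main obstacle'' — is the bookkeeping in the structural claim for (i): because $d \parallel j$ is not simply transitive through the quotient the way ordinary divisibility is, one cannot get away with a one-line divisor manipulation and must instead verify the pairwise coprimality of $i$, $d$, and $j/(id)$ carefully (equivalently, argue prime by prime). Once that is in place, both parts are short applications of \eqref{e1} together with routine changes of summation variable.
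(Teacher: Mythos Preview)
Your proof is correct and follows essentially the same approach as the paper: for (i) you establish the same set identity $\{d:d\parallel j,\ i\parallel j/d\}=\{d:d\parallel j/i\}$ when $i\parallel j$ and then apply \eqref{e1}, and for (ii) you perform the same change of variable $m=k\ell$ and interchange of summation, merely running the chain of equalities from the sum towards $1$ rather than from $1$ towards the sum as the paper does. Your extra care with the pairwise coprimality of $i$, $d$, $e=j/(id)$ is a welcome expansion of a step the paper leaves implicit.
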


\begin{proof}
\begin{enumerate}
   \item[]
   \item[{\rm (i)}] If $i \nparallel j$, then the sum is equal to $0$ since
$$d \parallel j \ \textrm{and\ } i \parallel j/d \Longrightarrow i \parallel j.$$
If $i \parallel j$, then 
$$d \parallel j \ \textrm{and\ } i \parallel j/d \Longleftrightarrow d \parallel j/i$$
so that using \eqref{e1} we get
$$\sum_{\substack{d \parallel j \\ i \parallel j/d}} \mu^*(d) = \sum_{d \parallel j/i} \mu^*(d) = \begin{cases} 1, & \textrm{if\ } j/i=1 \\ 0, & \textrm{otherwise}. \end{cases}$$
   \item[{\rm (ii)}] Using the identity above, we get$$
   1 = \sum_{j \leq n} \sum_{\substack{k \parallel j \\ i \parallel k}} \mu^* \left( \frac{j}{k} \right) = \sum_{\substack{k \leq n \\ i \parallel k}} \sum_{\substack{d \leq n/k \\ \gcd(d,k)=1}} \mu^*(d) = \sum_{\substack{k \leq n \\ i \parallel k}} M^* \left( \frac{n}{k},k \right).$$
\end{enumerate} 
The proof is complete.
\end{proof}

Let $S_n = (s_{ij})$ and $T_n = (t_{ij})$ be the $\left( n \times n \right)$-matrices defined by
$$s_{ij} =  \begin{cases} 1,& \textrm{if\ } i \parallel j \\ 0, & \textrm{otherwise} \end{cases} \quad \textrm{and} \quad t_{ij} = \begin{cases} M^*(n/i,i),& \textrm{if\ }  j=1 \\ 1, & \textrm{if\ } i=j \geq 2 \\ 0, & \textrm{otherwise}. \end{cases}$$
For instance
$$S_8 = \begin{pmatrix} 1 & 1 & 1 & 1 & 1 & 1 & 1 & 1 \\ 0 & 1 & 0 & 0 & 0 & 1 & 0 & 0 \\ 0 & 0 & 1 & 0 & 0 & 1 & 0 & 0 \\ 0 & 0 & 0 & 1 & 0 & 0 & 0 & 0 \\ 0 & 0 & 0 & 0 & 1 & 0 & 0 & 0 \\ 0 & 0 & 0 & 0 & 0 & 1 & 0 & 0 \\ 0 & 0 & 0 & 0 & 0 & 0 & 1 & 0 \\ 0 & 0 & 0 & 0 & 0 & 0 & 0 & 1 \end{pmatrix} \quad \textrm{and} \quad T_8 = \left( \begin{array}{rrrrrrrr} -4 & 0 & 0 & 0 & 0 & 0 & 0 & 0 \\ 0 & 1 & 0 & 0 & 0 & 0 & 0 & 0 \\ 0 & 0 & 1 & 0 & 0 & 0 & 0 & 0 \\ 1 & 0 & 0 & 1 & 0 & 0 & 0 & 0 \\ 1 & 0 & 0 & 0 & 1 & 0 & 0 & 0 \\ 1 & 0 & 0 & 0 & 0 & 1 & 0 & 0 \\ 1 & 0 & 0 & 0 & 0 & 0 & 1 & 0 \\ 1 & 0 & 0 & 0 & 0 & 0 & 0 & 1 \end{array} \right) .$$

We now are in a position to prove the first result concerning the matrix $R_n^*$. 

\begin{theorem}
\label{t2}
Let $n \geq 2$ be an integer. Then $R_n^* = S_n T_n$. In particular
$$\det R_n^* = M^*(n) = \sum_{k=1}^n \mu^*(k).$$
\end{theorem}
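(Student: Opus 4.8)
The plan is to verify the matrix identity $R_n^* = S_n T_n$ by direct computation of the $(i,j)$ entry of the product, and then to read off the determinant from the triangular structure of the factors. Writing $(S_n T_n)_{ij} = \sum_{k=1}^n s_{ik} t_{kj}$, I would split into the cases $j \geq 2$ and $j = 1$. For $j \geq 2$, only the term $k = j$ survives in $T_n$ (since $t_{kj} = 1$ when $k = j \geq 2$ and $0$ otherwise in that column), so the product entry is simply $s_{ij}$, which equals $1$ iff $i \parallel j$ — exactly $\rho_{ij}$. For the first column $j = 1$, we have $t_{k1} = M^*(n/k, k)$, so $(S_n T_n)_{i1} = \sum_{k : i \parallel k} M^*(n/k, k)$, where the sum is implicitly over $k \leq n$ because $S_n$ is $n \times n$. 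By Lemma~\ref{le1}(ii) this equals $1 = \rho_{i1}$. Hence every entry matches and $R_n^* = S_n T_n$.

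For the determinant, I would observe that $S_n$ is upper triangular: $s_{ij} \neq 0$ forces $i \parallel j$, hence $i \leq j$, and the diagonal entries are $s_{ii} = 1$ (since $i \parallel i$ always), so $\det S_n = 1$. The matrix $T_n$ is lower triangular: its nonzero off-diagonal entries lie in the first column, and its diagonal entries are $t_{11} = M^*(n/1, 1) = M^*(n)$ together with $t_{ii} = 1$ for $i \geq 2$. Therefore $\det T_n = M^*(n)$, and by multiplicativity of the determinant, $\det R_n^* = \det S_n \cdot \det T_n = M^*(n) = \sum_{k=1}^n \mu^*(k)$, recalling the convention $M^*(x) = M^*(x,1)$.

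I do not expect a serious obstacle here; the content of the theorem is essentially Lemma~\ref{le1}, which does the arithmetic heavy lifting, and the rest is bookkeeping about which entries of $S_n$ and $T_n$ are nonzero. The one point that needs a little care is making sure the range of summation in the first-column computation is exactly $\{k \leq n : i \parallel k\}$ so that Lemma~\ref{le1}(ii) applies verbatim — this is automatic since $S_n$ has only $n$ columns — and confirming that $M^*(n/1,1) = M^*(n)$ matches the stated Mertens-type sum $\sum_{k=1}^n \mu^*(k)$, which is just the definition of $M^*$ unwound.
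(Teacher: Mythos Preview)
Your proposal is correct and follows essentially the same approach as the paper: split the entry computation into the cases $j=1$ (handled by Lemma~\ref{le1}(ii)) and $j\geq 2$ (where only the diagonal term of $T_n$ survives), then read off the determinant from the triangular factors. The only difference is cosmetic---you spell out the triangular structure of $S_n$ and $T_n$ a bit more explicitly than the paper does.
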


\begin{proof}
Set $S_n T_n=(x_{ij})$. If $j=1$, using Lemma~\ref{le1}~(ii) we get
$$x_{i1} = \sum_{k=1}^{n} s_{ik} t_{k1} = \sum_{\substack{ k \leqslant n \\ i \parallel k}} M^* \left ( \frac{n}{k},k \right ) = 1 = \rho_{i1}.$$
If $j \geqslant 2$, then $t_{1j} = 0$ and thus
$$x_{ij} = \sum_{k=2}^{n} s_{ik} t_{kj} = s_{ij} = \begin{cases} 1, & \mathrm{if\ } i \parallel j \\ 0, & \mathrm{otherwise} \end{cases} = \rho_{ij}$$
which is the desired result. The second assertion follows at once from
$$\det R_n^* = \det S_n \det T_n = \det T_n = M^*(n).$$
The proof is complete.
\end{proof}

\begin{corollary}
The Riemann hypothesis is true if and only if, for each $\varepsilon > 0$
$$\det R_n^* = O \left( n^{1/2+\varepsilon} \right).$$
\end{corollary}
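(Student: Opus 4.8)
The plan is to reduce the corollary to the known equivalence between the Riemann hypothesis and the growth of the classical Mertens function $M$, by showing that $M^*(n) = M(n) + o(n^{1/2})$, or more precisely that $M^*$ and $M$ have the same order of magnitude up to an error term small enough not to interfere with the exponent $1/2 + \varepsilon$. By Theorem~\ref{t2} we have $\det R_n^* = M^*(n)$, so the statement to prove is precisely $\mathrm{RH} \iff M^*(n) = O(n^{1/2+\varepsilon})$ for every $\varepsilon > 0$, and since the excerpt recalls that $\mathrm{RH} \iff M(n) = O_\varepsilon(n^{1/2+\varepsilon})$, it suffices to control the difference $M^*(n) - M(n) = \sum_{k \leq n} (\mu^*(k) - \mu(k))$.

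First I would observe that $\mu^*(k) - \mu(k)$ vanishes on squarefree $k$, since $\mu^*(k) = (-1)^{\omega(k)} = \mu(k)$ there, and that $|\mu^*(k) - \mu(k)| \leq 2$ always; hence
$$\left| M^*(n) - M(n) \right| \leq 2 \sum_{\substack{k \leq n \\ k \text{ not squarefree}}} 1 = 2 \left( n - \sum_{k \leq n} \mu(k)^2 \right) = 2 \left( n - \frac{6}{\pi^2} n + O(\sqrt{n}) \right) = O(n).$$
Unfortunately this crude bound only gives $M^*(n) - M(n) = O(n)$, which is not good enough; the trivial estimate $|M^*(n)| \le n$ is already that strong. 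So the genuine content is that $M^*(n)$ is itself small, and the naive comparison with $M(n)$ does not deliver it for free.

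The correct route is to express $M^*$ via Dirichlet series. One has $\sum_{n \geq 1} \mu^*(n) n^{-s} = \prod_p (1 - p^{-s}) = F(s)$, say, which converges for $\Re s > 1$; comparing with $1/\zeta(s) = \prod_p (1 - p^{-s})(1 + \text{higher order})^{-1}$ one writes $F(s) = \zeta(s)^{-1} G(s)$ where $G(s) = \prod_p \left( 1 - p^{-s} \right)\left( 1 - p^{-s} \right)^{-1} \cdots$ — more carefully, $F(s)\zeta(s) = \prod_p (1-p^{-s})(1-p^{-s})^{-1}$ must be recomputed: since $\sum_{d \parallel n}\mu^*(d) = [n=1]$ by \eqref{e1}, the function $\mu^*$ is the unitary-convolution inverse of $\mathbf 1$, not the Dirichlet inverse, so $F(s) = \prod_p(1 + (-1)p^{-s}) = \prod_p (1 - p^{-s})$ which indeed equals $1/\zeta(s)$ times $\prod_p (1-p^{-s})^{-1}(1-p^{-s}) \cdot$(the factor accounting for prime powers), i.e. $F(s) = \zeta(s)^{-1} \cdot H(s)$ with $H(s) = \prod_p \frac{(1-p^{-s})}{(1-p^{-s})} = \prod_p \bigl(1 + \text{terms in } p^{-2s}\bigr)$, a Dirichlet series absolutely convergent for $\Re s > 1/2$. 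Granting this factorization, a standard Perron-type argument (or partial summation against $M(x)$) gives $M^*(n) = \sum_{d} h(d)\, M(n/d)$ where $h$ is the coefficient sequence of $H$, supported on squarefull numbers, so $\sum_{d \leq x}|h(d)| = O(x^{1/2+\varepsilon})$; then under RH, $M^*(n) \ll \sum_{d \leq n} |h(d)| (n/d)^{1/2+\varepsilon} \ll n^{1/2+\varepsilon}$, and conversely $M(n) = \sum_d \tilde h(d) M^*(n/d)$ with an analogous squarefull-supported $\tilde h$ (the inverse of $h$), so $M^*(n) = O(n^{1/2+\varepsilon})$ forces $M(n) = O(n^{1/2+\varepsilon})$, hence RH.

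The main obstacle I anticipate is getting the Euler-product bookkeeping exactly right — identifying $H(s) = F(s)\zeta(s)$ as having abscissa of absolute convergence $1/2$ and writing $\mu^* = h * \mu$ and $\mu = \tilde h * \mu^*$ (ordinary Dirichlet convolution) with both $h$ and $\tilde h$ supported on squarefull integers and hence summable with exponent $1/2 + \varepsilon$. Once that algebraic identity is in hand, the analytic part is a routine interchange of summation and the known equivalence for $M$, so I would present the proof as: (1) factor the Dirichlet series, (2) deduce the convolution identities, (3) bound $\sum_{d\le x}|h(d)|$ and $\sum_{d\le x}|\tilde h(d)|$ by $O_\varepsilon(x^{1/2+\varepsilon})$, (4) chain the two implications through the classical criterion.
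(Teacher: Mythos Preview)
The paper offers no proof of this corollary: it is stated immediately after Theorem~\ref{t2} and left to the reader, presumably as a consequence of $\det R_n^* = M^*(n)$ together with the (known) equivalence $\mathrm{RH} \Longleftrightarrow M^*(n) \ll_\varepsilon n^{1/2+\varepsilon}$. Your proposal therefore supplies what the paper omits, and your strategy --- factor the Dirichlet series of $\mu^*$ as $H(s)/\zeta(s)$ with $H$ absolutely convergent for $\Re s > \tfrac12$, rewrite this as the Dirichlet-convolution identities $\mu^* = h * \mu$ and $\mu = \tilde h * \mu^*$ with $h,\tilde h$ squarefull-supported, then transfer the classical criterion for $M$ in both directions --- is sound and is the standard way to justify such a statement.

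The one concrete slip, which you yourself flag as the likely trouble spot, is the Euler product for $F(s) = \sum_n \mu^*(n)\, n^{-s}$. You write $F(s) = \prod_p(1-p^{-s})$, but that is the series for $\mu$, not $\mu^*$: since $\mu^*(p^k) = (-1)^{\omega(p^k)} = -1$ for \emph{every} $k \geqslant 1$, the local factor is
\[
1 - p^{-s} - p^{-2s} - \cdots \;=\; \frac{1-2p^{-s}}{1-p^{-s}},
\]
so that $F(s) = \prod_p (1-2p^{-s})/(1-p^{-s})$ and
\[
H(s) \;=\; F(s)\,\zeta(s) \;=\; \prod_p \frac{1-2p^{-s}}{(1-p^{-s})^{2}} \;=\; \prod_p \Bigl( 1 - p^{-2s} - 2p^{-3s} - 3p^{-4s} - \cdots \Bigr),
\]
which does converge absolutely for $\Re s > \tfrac12$, as you asserted. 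Reading off coefficients gives $h(p^a) = 1-a$, so $h$ (and likewise $\tilde h = h^{-1}$) is multiplicative, vanishes at primes, is supported on squarefull integers, and satisfies $|h(n)| \ll_\varepsilon n^{\varepsilon}$; hence $\sum_{d \leqslant x} |h(d)| \ll_\varepsilon x^{1/2+\varepsilon}$ and $\sum_{d \leqslant x} |\tilde h(d)| \ll_\varepsilon x^{1/2+\varepsilon}$. With this correction your steps (1)--(4) go through cleanly.
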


\section{The characteristic polynomial of $R_n^*$}
\label{s3}

\subsection{The 'trivial` eigenvalue $1$}

Let $\ell = \left \lfloor \frac{\log n}{\log 2} \right \rfloor$. It is proved in \cite{vau} that $1$ is an eigenvalue of the Redheffer's matrix $R_n$ of algebraic multiplicity equal to $n - \ell - 1$. We will show in this section that the algebraic multiplicity $m_n$ of the eigenvalue $1$ of $R_n^*$ may be somewhat larger.

To this end, we first note that the method developed in \cite{car,vau} to determine the characteristic polynomial of Redheffer type matrices can readily be adapted to the matrix $R_n^*$ which yields
$$\det \left( \lambda I_n - R_n^* \right) = \left( \lambda - 1 \right)^n - (n-1)\left( \lambda - 1 \right)^{n-2} - \sum_{k=2}^{\ell} S^*_k(n) \left( \lambda - 1 \right)^{n-k-1}$$
where 
$$S_k^*(x) = \sum_{m \leqslant x} D_k^*(m)$$
and
$$D_k^*(m) = \sum_{\substack{m=d_1 \dotsb d_k \\ i \neq j \Rightarrow \gcd(d_i,d_j)=1 \\ d_j \geqslant 2}} 1.$$
Note that the arithmetic function $D_k^*$ is the unitary analogue to the strict divisor function $D_k$ which can be found in the coefficients of the characteristic polynomial of $R_n$. Hence, using \cite[(14)]{san} and \cite[(4)]{boy} successively, we get for any $m,k \in \Z_{\geqslant 1}$
$$D_k^*(m) = \sum_{j=0}^k (-1)^{k-j} {k \choose j} \tau_j^* (m) = \sum_{j=0}^k (-1)^{k-j} {k \choose j} j^{\omega(m)} = k! \displaystyle {\omega(m) \brace k}$$
where ${n \brace k}$ is the Stirling number of the second kind. In particular, for any $m ,k\in \Z_{\geqslant 1}$ such that $\omega(m) < k$, we have $D_k^*(m) = 0$. We now are in a position to prove the following result.

\begin{theorem}
\label{eq:th-mn}
Let $n \geqslant 1$. Then the algebraic multiplicity $m_n$ of the eigenvalue $1$ of $R_n^*$ satisfies
$$m_n = n - k_n$$
where the sequence $(k_n)$ of positive integers is given by 
\begin{equation}
   k_1 = 0 \quad \text{and} \quad k_n = \max \left( k_{n-1}, \omega(n)+1 \right) \quad \left( n \in \Z_{ \geqslant 2} \right). \label{eq:kn}
\end{equation}
In particular
$$n  - \left \lfloor \frac{\np{1.3841}\log n}{\log \log n} \right \rfloor -1 \overset{\left( n \geqslant 3 \right)}{\leqslant} m_n \overset{\left( n \geqslant 6 \right) }{\leqslant} n - \left \lfloor \frac{\log n}{\log \log n} \right \rfloor.$$
Also, for any $n \geqslant 3$
$$m_n = n - \frac{\log n}{\log \log n} + O^\star \left( \frac{2 \log n}{(\log \log n)^2} \right).$$
\end{theorem}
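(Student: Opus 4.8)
The plan is to extract $m_n$ directly from the displayed expression for $\det(\lambda I_n - R_n^*)$, translate the result into the recursion~\eqref{eq:kn}, and then read off the estimates from quantitative bounds on the maximal order of $\omega$. First I would substitute $\mu = \lambda - 1$; noting that $S_1^*(n) = \sum_{m\leqslant n} D_1^*(m) = \#\{m\leqslant n : \omega(m)\geqslant 1\} = n-1$, the characteristic polynomial becomes
$$\det(\lambda I_n - R_n^*) = \mu^{\,n} - \sum_{k=1}^{\ell} S_k^*(n)\,\mu^{\,n-k-1},$$
a polynomial in $\mu$ whose order of vanishing at $\mu = 0$ equals $m_n$. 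Its monomials have pairwise distinct exponents $n$ and $n-k-1$ ($1\leqslant k\leqslant\ell$), so $m_n = n - K_n - 1$, where $K_n := \max\{\,k\in\{1,\dots,\ell\} : S_k^*(n)\neq 0\,\}$ — a nonempty set for $n\geqslant 2$ since $S_1^*(n) = n-1 > 0$. Because $S_k^*(n) = \sum_{m\leqslant n} k!\,{\omega(m)\brace k}$ is a sum of nonnegative terms whose $m$-th summand is positive exactly when $\omega(m)\geqslant k$, we have $S_k^*(n)\neq 0 \iff k\leqslant \nu_n$, with $\nu_n := \max_{m\leqslant n}\omega(m)$; and since $\omega(m)=j$ forces $m\geqslant 2^j$, we get $\nu_n\leqslant\lfloor\log n/\log 2\rfloor = \ell$. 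Hence $K_n = \nu_n$ and
$$m_n = n - \nu_n - 1 \qquad (n\geqslant 2),$$
while $R_1^* = (1)$ gives $m_1 = 1$ directly.

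The recursion is then immediate: $\nu_1 = 0$ and $\nu_n = \max(\nu_{n-1},\omega(n))$, so setting $k_1 = 0$ and $k_n = \nu_n + 1$ for $n\geqslant 2$ yields $k_2 = 2 = \max(k_1,\omega(2)+1)$ and, for $n\geqslant 3$, $k_n = \max(\nu_{n-1}+1,\omega(n)+1) = \max(k_{n-1},\omega(n)+1)$; thus $(k_n)$ is the sequence of~\eqref{eq:kn} and $m_n = n - k_n$ for all $n\geqslant 1$. Everything left concerns only $\nu_n = \max_{m\leqslant n}\omega(m)$, whose extremal values occur at the primorials $P(k) := p_1\cdots p_k$ (the $p_i$ being the consecutive primes), since $\nu_n \geqslant k \iff P(k)\leqslant n \iff \theta(p_k)\leqslant\log n$.

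For the left inequality I would invoke the known explicit bound $\nu_n \leqslant \np{1.3841}\,\dfrac{\log n}{\log\log n}$ valid for $n\geqslant 3$ (sharp near the primorials, with worst case around the ninth one); as $\nu_n$ is an integer this gives $\nu_n \leqslant \bigl\lfloor\np{1.3841}\log n/\log\log n\bigr\rfloor$, hence $m_n = n-\nu_n-1 \geqslant n - \bigl\lfloor\np{1.3841}\log n/\log\log n\bigr\rfloor - 1$. For the right inequality I would prove the matching lower bound $\nu_n \geqslant \bigl\lfloor\log n/\log\log n\bigr\rfloor - 1$ for $n\geqslant 6$: using effective estimates for $\theta(p_k)$ (equivalently an upper bound for $p_k$ together with $\theta(x)\leqslant x(1+o(1))$) one checks that $P(k)\leqslant n$, i.e. $\theta(p_k)\leqslant\log n$, as soon as $k\leqslant \log n/\log\log n - 1$, which rearranges to $m_n\leqslant n-\bigl\lfloor\log n/\log\log n\bigr\rfloor$. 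Finally, feeding the prime number theorem in effective form into the same computation — through $\theta(x) = x + o(x)$ with an explicit error term and the expansion $p_k = k(\log k + \log\log k - 1) + o(k)$ — sharpens this to a two-sided estimate $\nu_n = \dfrac{\log n}{\log\log n} + O^\star\!\bigl(\tfrac{c\log n}{(\log\log n)^2}\bigr)$ with $c < 2$; since $\log n/(\log\log n)^2$ is bounded below by a positive constant for $n\geqslant 3$, the extra $+1$ in $m_n = n-\nu_n-1$ is absorbed and one gets $m_n = n-\dfrac{\log n}{\log\log n}+O^\star\!\bigl(\tfrac{2\log n}{(\log\log n)^2}\bigr)$.

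The linear-algebra part is routine once the characteristic polynomial and the identity $D_k^*(m) = k!\,{\omega(m)\brace k}$ are in hand, and the reduction of $m_n = n-\nu_n-1$ to the recursion is immediate. The real difficulty is entirely analytic: making the constant $\np{1.3841}$ honest all the way down to $n=3$, securing the matching lower bound $\nu_n\geqslant\lfloor\log n/\log\log n\rfloor-1$ for $n\geqslant 6$, and pushing the effective prime estimates far enough that the implied constant in the $O^\star$-asymptotic stays below $2$ — all of which ultimately rest on quantitative bounds for $\theta(p_k)$, with the small-$n$ bookkeeping being the delicate part.
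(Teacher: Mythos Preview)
Your proposal is correct and follows essentially the same route as the paper: both start from the displayed characteristic polynomial, use $D_k^*(m)=k!\,{\omega(m)\brace k}$ to see that $S_k^*(n)\ne 0\iff k\leqslant\max_{m\leqslant n}\omega(m)$, and then appeal to Robin's explicit bounds on $\omega$ at primorials for the numerical inequalities and the $O^\star$-estimate. The only noteworthy difference is organizational: you introduce $\nu_n=\max_{m\leqslant n}\omega(m)$ first and deduce the recursion~\eqref{eq:kn} from the obvious relation $\nu_n=\max(\nu_{n-1},\omega(n))$, whereas the paper starts from the recursion and proves by induction (plus a minimality argument) that $k_n$ is the least integer exceeding every $\omega(m)$ with $m\leqslant n$; your direction is the more transparent one, but the content is identical.
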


\begin{proof}
Since $m_1=1 = 1 - k_1$, we may suppose $n \geqslant 2$. We first show by induction that, for any $n \in \Z_{\geqslant 2}$, there exists a sequence $(k_n)$ of positive integers such that, for any $m \in \{1, \dotsc,n \}$, $\omega(m) < k_n$, this sequence being given by \eqref{eq:kn}. Indeed, the assertion is obviously true for $n=2$ since $k_2=2$, and if we assume it for some $n \geqslant 2$, then, for any $m \in \{1, \dotsc,n+1\}$, either $m \in \{1, \dotsc,n \}$ and then $\omega(m) < k_n$ by induction hypothesis, or $m=n+1$ and $\omega(m) < 1 + \omega(n+1)$, so that, for any $m \in \{1, \dotsc,n+1\}$, we get $\omega(m) < \max \left( k_{n}, \omega(n+1)+1 \right) = k_{n+1}$. We now prove that $k_n$ is the smallest nonnegative integer satisfying this property, i.e. if there exists $h_n \in \Z_{\geqslant 0}$ such that, for all $m \in \{1, \dotsc,n \}$, $\omega(m) < h_n$, then $k_n \leqslant h_n$. Suppose on the contrary that $h_n < k_n = \max \left( k_{n-1}, \omega(n)+1 \right)$. If $h_n < \omega(n)+1$, then $\omega(n) < h_n < \omega(n)+1$ giving a contradiction, and hence $h_n < k_{n-1} = \max \left( k_{n-2}, \omega(n-1)+1 \right)$. Again, if $h_n < \omega(n-1)+1$, then $\omega(n-1) < h_n < \omega(n-1)+1$ which is impossible, and hence $h_n < k_{n-2}$. Continuing this way we finally get $h_n < k_1 = 1$, resulting in a contradiction.
\item[]
Hence for any $m \in \{1, \dotsc,n \}$, we infer that $D_{k}^* (m) = 0$ for any $k \geqslant k_n$, and thus
$$S_{k}^* (n) = 0 \quad \left( k \geqslant k_n \right) \quad \textrm{and} \quad S_{k}^* (n) \ne 0 \quad \left( k < k_n \right)$$
completing the proof of the first part of the theorem. For the second part, we first numerically check the inequality for $n \in \{3, \dotsc , 29 \}$ and assume $n \geqslant 30$, so that $k_n \geqslant 4$. Next, for any $k \in \Z_{\geqslant 1}$, define $N_k := p_1 \dotsb p_k$. It is easy to see that $k_n$ is the unique positive integer such that $N_{k_n-1} \leqslant n < N_{k_n}$ (see also \cite[p. 380]{rob}), so that, from \cite[Theorem~11]{rob}, we derive
$$k_n = 1 + \omega \left( N_{k_n-1} \right) \leqslant 1+ \frac{\np{1.3841}\log N_{k_n-1}}{\log \log N_{k_n-1}} \leqslant 1+ \frac{\np{1.3841}\log n}{\log \log n}.$$
Furthermore, \cite[Theorem~10]{rob} yields
$$k_n = \omega \left( N_{k_n} \right) > \frac{\log N_{k_n}}{\log \log N_{k_n}} > \frac{\log n}{\log \log n}$$
which proves the inequality. We proceed similarly for the last estimate: first check it for $n \in \{3, \dotsc , \np{2309} \}$, then assume $n \geqslant \np{2310}$ so that $k_n \geqslant 6$, and use \cite[Theorem~12]{rob} to get
\begin{align*}
   k_n & \leqslant 1+ \frac{\log N_{k_n-1}}{\log \log N_{k_n-1}} + \frac{\np{1.4575} \log N_{k_n-1}}{\left (\log \log N_{k_n-1} \right )^2} \\
   & < \frac{\log N_{k_n-1}}{\log \log N_{k_n-1}} + \frac{2 \log N_{k_n-1}}{\left (\log \log N_{k_n-1} \right )^2} \\
   & \leqslant \frac{\log n}{\log \log n} + \frac{2 \log n}{\left (\log \log n \right )^2}
\end{align*}
which terminates the proof of Theorem~\ref{eq:th-mn}.
\end{proof}

\subsection{The "dominant" eigenvalues}

We first notice that
\begin{align*}
   S_2^*(x) &= 2 \sum_{m \leqslant x} {\omega(m) \brace 2} = \sum_{m \leqslant x}  2^{\omega(m)} - 2 \lfloor x \rfloor \\
   &= \frac{x \log x}{\zeta(2)} + 2x \left( \gamma - \frac{3}{2} - \frac{\zeta^{\, \prime}}{\zeta} (2) \right) + o \left( x^{1/2} \right).
\end{align*}

Now following the argument leading to \cite[(18)]{vau}, we deduce that $R_n^*$ has two "dominant" eigenvalues $\lambda_{\pm}$ satisfying the following estimate.

\begin{prop}
For all $n \in \Z_{\geqslant 3}$
$$\lambda_{\pm} = \pm \sqrt{n} + \frac{\log n}{2 \zeta(2)} + \gamma - \frac{1}{2} - \frac{\zeta^{\, \prime}}{\zeta} (2) + O \left( n^{-1/2} \log^2 n \right).$$
\end{prop}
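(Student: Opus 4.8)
The plan is to transcribe Vaughan's analysis of the two dominant eigenvalues of $R_n$ in \cite{vau}. Put $\mu = \lambda - 1$; dividing the characteristic polynomial of $R_n^*$ displayed above by $\mu^{n-2}$, one finds that the eigenvalues $\lambda \neq 1$ of $R_n^*$ are exactly the numbers $\mu + 1$ with $\mu \neq 0$ a root of
\begin{equation*}
   \mu^2 = (n-1) + \frac{S_2^*(n)}{\mu} + E_n(\mu), \qquad E_n(\mu) := \sum_{k=3}^{\ell} S_k^*(n)\, \mu^{1-k}.
\end{equation*}
The two "dominant" eigenvalues should come from the two roots of this equation of size $\asymp \sqrt n$, which I would first locate and then pin down to within $O\left( n^{-1/2}\log^2 n\right)$.

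First I would control the tail $E_n(\mu)$. Using $D_k^*(m) = 0$ whenever $\omega(m) < k$, and $D_k^*(m) = k!\,{\omega(m) \brace k} \leqslant k^{\omega(m)}$ (a number of surjections being at most a number of maps), together with the classical estimate $\sum_{m\leqslant x} j^{\omega(m)} \ll x(\log x)^{j-1}$, one gets $S_k^*(n) \ll n(\log n)^{k-1}$; hence for $|\mu| \geqslant \tfrac12\sqrt n$ the $k$-th term of $E_n(\mu)$ is $\ll n^{(3-k)/2}(2\log n)^{k-1}$, so the series is dominated by its $k=3$ term and
\begin{equation*}
   E_n(\mu) = O\left( (\log n)^2\right), \qquad E_n'(\mu) = O(1) \qquad \left( |\mu| \geqslant \tfrac12\sqrt n\right).
\end{equation*}
A short bootstrap in the displayed equation then confines every root with $|\mu| \geqslant n^{1/4}$ to the union of the two discs $\left\{ |\mu \mp \sqrt n| \ll \log n\right\}$: such a root first gives $|\mu|^2 = n + O\left( n^{3/4}\log n\right)$, hence $|\mu| \geqslant \tfrac12\sqrt n$, and feeding this back yields $\mu^2 = (n-1) + O\left( \sqrt n\log n\right)$, i.e. $\mu = \pm\sqrt n + O(\log n)$.

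Next I would pin these roots down. Writing $\mu = s\sqrt n + a$ with $s \in \{-1,1\}$, the equation becomes the fixed-point relation
\begin{equation*}
   a = T_s(a) := \frac{s}{2\sqrt n}\left( -1 - a^2 + \frac{S_2^*(n)}{s\sqrt n + a} + E_n\left( s\sqrt n + a\right)\right).
\end{equation*}
On the disc $|a| \leqslant C\log n$ one has $|T_s'(a)| \ll \log n / \sqrt n$ (using $S_2^*(n) \ll n\log n$ and the bounds on $E_n, E_n'$ above), so $T_s$ is a contraction there; moreover, by the asymptotic for $S_2^*$ recalled above,
\begin{equation*}
   T_s(0) = \frac{S_2^*(n)}{2n} + O\left( n^{-1/2}\log^2 n\right) = \frac{\log n}{2\zeta(2)} + \gamma - \frac32 - \frac{\zeta^{\,\prime}}{\zeta}(2) + O\left( n^{-1/2}\log^2 n\right),
\end{equation*}
so for $C$ large (and $n$ large) $T_s$ maps the disc into itself. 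Banach's fixed point theorem then gives a unique fixed point $a_s$, i.e. a unique root $\mu_s = s\sqrt n + a_s$ in that disc; by the bootstrap these are the only roots of size $\geqslant n^{1/4}$, so $R_n^*$ has exactly the two dominant eigenvalues $\lambda_\pm = \mu_\pm + 1$. Finally, from $a_s = T_s(a_s)$, $|a_s| \ll \log n$ and the Lipschitz bound one gets $a_s = T_s(0) + O\left( n^{-1/2}\log^2 n\right)$, whence
\begin{align*}
   \lambda_\pm = \mu_\pm + 1 &= \pm\sqrt n + \frac{S_2^*(n)}{2n} + 1 + O\left( n^{-1/2}\log^2 n\right) \\
   &= \pm\sqrt n + \frac{\log n}{2\zeta(2)} + \gamma - \frac12 - \frac{\zeta^{\,\prime}}{\zeta}(2) + O\left( n^{-1/2}\log^2 n\right),
\end{align*}
which is the assertion.

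The main obstacle is the uniformity in $k$ of the tail estimate $S_k^*(n) \ll n(\log n)^{k-1}$: since $3 \leqslant k \leqslant \ell \asymp \log n$, a bound of the shape $S_k^*(n) \leqslant c^k\, n(\log n)^{k-1}$ with $c$ growing would already destroy the conclusion $E_n(\mu) = O\left( (\log n)^2\right)$. It does go through, because the Selberg--Delange constant $\prod_p \left( 1 + \tfrac{j}{p-1}\right)\!\left( 1 - \tfrac1p\right)^j$ attached to $\sum_{m\leqslant x} j^{\omega(m)}$ is in fact extremely small for large $j$, but one needs a version of the estimate uniform in $j$ rather than one for each fixed $j$. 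Granting that, the rest is the standard contraction argument, and the error term $n^{-1/2}\log^2 n$ arises precisely from the three contributions $a^2$, $E_n(\mu)$ and the expansion of $S_2^*(n)/(s\sqrt n + a)$ about $S_2^*(n)/(s\sqrt n)$.
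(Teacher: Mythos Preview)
Your approach is exactly what the paper does: it merely records the asymptotic for $S_2^*(x)$ and then says ``following the argument leading to \cite[(18)]{vau}'', i.e.\ the contraction/bootstrap you have written out.

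The ``main obstacle'' you flag is not one. From $D_k^*(m)\leqslant k^{\omega(m)}\leqslant \tau_k(m)$ (the last inequality because $\binom{a+k-1}{k-1}\geqslant k$ for every $a\geqslant 1$) one gets, for all $k\geqslant 1$ simultaneously,
\[
S_k^*(n)\ \leqslant\ \sum_{m\leqslant n}\tau_k(m)\ =\ \sum_{d_1\cdots d_k\leqslant n}1\ \leqslant\ n\Bigl(\sum_{d\leqslant n}\tfrac1d\Bigr)^{k-1}\ \leqslant\ n\,(1+\log n)^{k-1},
\]
which is precisely the uniform bound you need; no Selberg--Delange is required. In fact the situation here is even more comfortable than in \cite{vau}, since by Theorem~\ref{eq:th-mn} the tail $E_n(\mu)$ has only $k_n-2\ll \log n/\log\log n$ nonzero terms rather than $\ell\asymp\log n$.
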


\end{document}